\theoremstyle{cupthm}
\newtheorem{theorem}{Theorem}
\theoremstyle{cuppr}
\theoremstyle{cupcl}
\theoremstyle{cupdefn}
\newtheorem{definition}{Definition}
\theoremstyle{cuprem}
\numberwithin{equation}{section}
\begin{document}
\runningtitle{On derivations of some classes of Leibniz algebras}
\title{On derivations of some classes of Leibniz algebras}
\cauthor
\author[1]{Isamiddin S. Rakhimov}
\author[2]{Nashri Al-Hossain}

\address[1]{Institute for Mathematical Research (INSPEM) $\&$ Department of Mathematics, FS, Universiti Putra Malaysia (Malaysia)
\email{risamiddin@gmail.com}}

\address[2]{Department of Mathematics, FS, Universiti Putra Malaysia (Malaysia), \email{hossa-1427@hotmail.com}}

\authorheadline{Isamiddin S.Rakhimov, Nashri Al-Hossain}

\begin{abstract}
In the paper we describe the derivations of complex $n$-dimensional naturally graded filiform Leibniz algebras $NGF_1, NGF_2\ \text{and} \ \  NGF_3.$  We show that the dimension of the derivation algebras of $NGF_1$ and $NGF_2$ equals $n+1$ and $n+2,$ respectively, while the dimension of the derivation algebra of $NGF_3$ is equal to $2n-1.$ The second part of the paper deals with the description of the derivations of complex $n$-dimensional filiform non Lie Leibniz algebras, obtained from naturally graded non Lie filiform Leibniz algebras. It is well known that this class is split into two classes denoted by $FLb_n$ and  $SLb_n.$ Here we found that for $L\in FLb_n$ we have $n-1 \leq dim\ Der(L)\leq n+1$ and for algebras $L$ from $SLb_n$ the inequality $n-1\leq dim\ Der(L) \leq n+2$ holds true.
\end{abstract}

\maketitle
\textbf{Mathematics Subject Classification 2000}: 17A32, 17B40.

\textbf{Key Words and Phrases}: Lie algebra, Leibniz algebra, Gradation, Derivation, Derivation algebra.

\section{Introduction}

A graded algebra is an algebra endowed with a gradation which is compatible with the algebra bracket. A choice of Cartan decomposition endows any semisimple Lie algebra with the structure of a graded Lie algebra. Any parabolic Lie algebra is also a graded Lie algebra. Lie algebra $sl_2$ of trace-free $2\times 2$ matrices is graded by the generators: $$X=\left(\begin{matrix} 0 & 1 \\ 0 & 0 \end{matrix} \right),\quad Y=\left(\begin{matrix} 0 & 0\\ 1 & 0 \end{matrix} \right)\ \mbox{and}\ H=\left(\begin{matrix} 1 & 0\\ 0 &-1 \end{matrix} \right).$$ These satisfy the relations $[X,Y] = H,\ [H,X] = 2X,\ [H,Y] = -2Y.$ Hence with $$g_{-1} = span(X),\ g_0 = span(H)\ \mbox{and}\ g_1 = span(Y),$$ the decomposition $sl_2 = g_{-1} \oplus g_0 \oplus g_1$ presents $sl_2$ as a graded Lie algebra.

It is well-known that the natural gradation of
nilpotent Lie and Leibniz algebras is very helpful in
investigation of their structural properties.  This technique is
more effective when the length of the natural gradation is
sufficiently large. The case when it is maximal the algebra is
called \emph{filiform}. For applications of this technique, for
instance, see \cite{V} and Goze et al. \cite{GK1} (for Lie algebras) and
\cite{AO} (for Leibniz algebras) case. In \cite{V}
Vergne introduced  the concept of naturally graded filiform Lie algebras as
those admitting a gradation associated with the lower central series. In that paper,
she also classified them, up to isomorphism. Apart from that, several authors
have studied algebras which admit a connected gradation of maximal length,
this is, whose length is exactly the dimension of the algebra. So, Khakimdjanov
started this study in \cite{Kh}, Reyes, in \cite{GJMR},
continued this research by giving an induction classification method and finally,
Millionschikov in \cite{M} gave the full list of these algebras (over an arbitrary field
of zero characteristic).

Recall that an algebra $L$ over a field $K$ is called  Leibniz algebra if it satisfies the following Leibniz identity: $$[x,[y,z]]=[[x,y],z]-[[x,z],y], $$ where $[\cdot,\cdot]$ denotes the multiplication in $L$ (First the Leibniz algebras has been introduced in \cite{L}). It is not difficult to see that the class of Leibniz algebras is ``non-antisymmetric'' generalization of the class of Lie algebras. In this paper we are dealing with the derivations of some classes of complex Leibniz algebras.

The outline of the paper is as follows. Section 2 contains preliminary results on Leibniz algebras which we will use in the paper. The main results of the paper are in Section 3. The first part of this section deals with the description of derivations of naturally graded Leibniz algebras. In the second part (Section 3.2) we study derivations of filiform Leibniz algebras arising from naturally graded non Lie filiform Leibniz algebras. It is known that the last is split into two disjoint subclasses \cite{AO}. In the paper we denote these classes by $FLb_n,$ and $SLb_n.$ We show that according to dimensions of the derivation algebras each class is split into subclasses as follows:
$$FLb_n=F_{n-1}\cup F_n\cup F_{n+1},$$
 $$SLb_n=S_{n-1}\cup S_n\cup S_{n+1}\cup S_{n+2},$$
where $F_i$ and $S_j$ are subclasses of $FLb_n$ and $SLb_n,$ respectively, with the derivation algebras' dimensions $i$ and $j.$

   Further all algebras considered are over the field of complex
numbers $\mathbb{C}$ and omitted products of basis vectors are
supposed to be zero.

\section{Preliminaries}
\indent\indent This section contains definitions and results which will be needed throughout the paper.

Let $L$ be a Leibniz algebra. We put:
$$L^1 = L,\ \ L^{k+1} = [L^k,L],\ \ k \geq 1.$$

\begin{definition}
A Leibniz algebra $L$ is said to be nilpotent if there exists $ s \in \mathbb{N}$ such that
$$L^1 \supset L^2 \supset ... \supset L^s = {0}.$$
\end{definition}
\begin{definition}
An $n$-dimensional Leibniz algebra $L$ is said to be filiform if $dimL^i = n - i,$ where $2 \leq i \leq n.$
\end{definition}
Obviously, a filiform Leibniz algebra is nilpotent.
\begin{definition}
A linear transformation $d$ of a Leibniz algebra $L$ is called a derivation if
$$d([x,y])=[d(x),y]+[x,d(y)] \ \ \ \ {\rm{for \ all}}\  x,y \in {L}.$$
\end{definition}
 The set of all derivations of an algebra $L$ is denoted by $Der(L).$
By $Lb_n$ we denote  the set of all $n$-dimensional filiform Leibniz algebras, appearing from naturally graded non Lie filiform Leibniz algebras. For Lie algebras the study of derivations has been initiated in \cite{J}. The derivations of naturally graded filiform Leibniz algebras were first considered by Omirov in \cite{O}.
In the following theorem we declare the results of the papers
\cite{AO}, \cite{V}.
\begin{theorem}
 Any complex $n$-dimensional naturally graded filiform Leibniz algebra is
isomorphic to one of the following pairwise non isomorphic algebras:
$$NGF_1 =\left\{\begin{array}{llll}
[e_1, e_1] = e_3, \\
{[e_i, e_1] = e_{i+1},\  \ \ \qquad \qquad \qquad \qquad  \qquad \qquad 2 \leq i \leq n - 1}.
\end{array}
\right.$$

$$NGF_2 =\left\{\begin{array}{lll}
[e_1, e_1] = e_3,\\
{[e_i, e_1] = e_{i+1},\ \ \ \qquad \qquad \qquad \qquad  \qquad \qquad  3 \leq i \leq n - 1}.
\end{array}\right.$$

$$NGF_3 =\left\{\begin{array}{lll}
[e_i, e_1] = -[e_1, e_i] = e_{i+1},\   \qquad \qquad \qquad \qquad  2 \leq i \leq n - 1\\
{[e_i, e_{n+1-i}] = -[e_{n+1-i}, e_i] = \alpha(-1)^{i+1}e_n,\ \ \ \  2 \leq i \leq n -1}.
\end{array}\right.$$
where $ \alpha \in \{0, 1\}$ for even $n$ and $ \alpha = 0 $ for odd $n.$
\end{theorem}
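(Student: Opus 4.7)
The plan is to start from the natural gradation itself. For a filiform Leibniz algebra $L$, set $g_i = L^i / L^{i+1}$. The filiform condition forces $\dim g_1 = 2$ and $\dim g_i = 1$ for $2 \leq i \leq n-1$, and compatibility of the gradation with the bracket gives $[g_i, g_j] \subseteq g_{i+j}$, where we understand the right-hand side to vanish for $i+j \geq n$. Choose a homogeneous basis: pick $e_1, e_2$ spanning $g_1$, and for $i \geq 3$ let $e_i$ be a generator of $g_{i-1}$. Because $L$ is generated as an algebra by $g_1$, at least one of the brackets $[e_1,e_1], [e_1,e_2], [e_2,e_1], [e_2,e_2]$ must be a nonzero multiple of $e_3$, and by replacing $e_1$ with a suitable linear combination in $g_1$ we can arrange that $e_1$ is a generator, i.e. that right multiplication $R_{e_1} \colon g_i \to g_{i+1}$ is an isomorphism for $1 \leq i \leq n-2$. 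After rescaling the $e_i$ we may then assume $[e_i, e_1] = e_{i+1}$ for $2 \leq i \leq n-1$.

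Next I would pin down the remaining degree-$2$ products. Write $[e_1,e_1] = \alpha e_3$, $[e_1,e_2] = \beta e_3$, $[e_2,e_2] = \gamma e_3$. The Leibniz identity applied to triples of the form $(e_i,e_j,e_1)$ together with the gradation forces a cascade of relations that propagate these four parameters $(\alpha,\beta,1,\gamma)$ into all higher-degree products $[e_1,e_i]$ and $[e_i,e_j]$. A direct induction on $i$, using $[e_i, e_1] = e_{i+1}$ and the Leibniz identity $[x,[y,e_1]] = [[x,y],e_1] - [[x,e_1],y]$, expresses every structure constant in terms of $\alpha,\beta,\gamma$.

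The third step is to run the case analysis on $(\alpha,\beta,\gamma)$ modulo the remaining basis freedom that preserves the gradation, namely transformations $e_1 \mapsto A e_1 + B e_2$, $e_2 \mapsto C e_1 + D e_2$ with $AD - BC \neq 0$ (and the induced rescalings on higher degrees). If $\alpha \neq 0$ one can reduce to $\alpha = 1$ and absorb the remaining parameters; this produces $NGF_1$ when $g_1$ is \emph{not} abelian and $NGF_2$ when the further products from $e_2$ vanish, the two being genuinely non-isomorphic because the gradation-preserving transformations cannot interchange them. If $\alpha = 0$ and the algebra is antisymmetric in degree $2$ (i.e.\ $[e_1,e_2] = -[e_2,e_1]$), a careful application of Leibniz for every triple $(e_i, e_j, e_k)$ with $i+j+k = n+1$ forces the ``palindrome'' relations $[e_i, e_{n+1-i}] = (-1)^{i+1}\alpha e_n$ appearing in $NGF_3$, with $\alpha$ a free parameter that survives only in even dimensions (the odd-$n$ case kills it by parity of the skew-symmetric form on $g_1 \oplus \dots \oplus g_{n-1}$ induced by the top bracket). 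A final scaling shows $\alpha$ may be normalized to $0$ or $1$.

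The main obstacle I anticipate is the last step: checking that the three families are pairwise non-isomorphic and that no further $NGF_3$-type deformation can be absorbed into $NGF_1$ or $NGF_2$. The cleanest way is to compute a gradation invariant such as $\dim [L, L] \cap Z(L)$, or the symmetry type of the induced bilinear form $g_1 \times g_{n-2} \to g_{n-1}$, which differs across the three families; the remaining bookkeeping is routine once this invariant is identified. For the existence side one only needs to verify the Leibniz identity on basis triples, which reduces in each family to a finite check dictated by the gradation.
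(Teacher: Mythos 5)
First, a point of reference: the paper does not prove this statement at all. It is quoted as a known classification, with the non-Lie cases $NGF_1$, $NGF_2$ taken from Ayupov--Omirov \cite{AO} and the Lie case $NGF_3$ from Vergne \cite{V}. So there is no in-paper argument to compare yours against; your proposal has to stand on its own, and as written it is an outline with several genuine gaps rather than a proof.

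The central missing idea is the Leibniz-specific mechanism that actually produces the trichotomy: in any Leibniz algebra the elements $[x,x]$ and $[x,y]+[y,x]$ lie in the right annihilator $\{z:[w,z]=0\ \text{for all}\ w\}$. In the naturally graded filiform setting this is what forces the dichotomy ``either some square $[x,x]$ with $x\in g_1$ is nonzero, in which case $e_3=[e_1,e_1]$ right-annihilates everything and by induction $[\,\cdot\,,e_j]=0$ for all $j\ge 3$ (the non-Lie branch leading to $NGF_1,NGF_2$), or the algebra is a Lie algebra and one is in Vergne's classification ($NGF_3$, i.e.\ $L_n$ and $Q_n$).'' Your case analysis on $(\alpha,\beta,\gamma)$ never isolates this, and your ``propagation'' step is asserted rather than carried out; without the right-annihilator fact it is not clear why the mixed products $[e_i,e_j]$ with $i,j\ge 3$ all vanish in the first two families. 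There are also concrete errors in the case split: the normalization $[e_i,e_1]=e_{i+1}$ for all $2\le i\le n-1$ that you impose at the outset is incompatible with the stated form of $NGF_2$, where $[e_2,e_1]=0$; and ``$NGF_1$ when $g_1$ is not abelian, $NGF_2$ when the further products from $e_2$ vanish'' does not correctly separate the two classes (the actual invariant is whether the left and right kernels of the induced bilinear map $g_1\times g_1\to g_2$ coincide). Finally, the $NGF_3$ branch needs two nontrivial steps you only gesture at: that antisymmetry of the degree-two component forces the whole graded algebra to be Lie, and the derivation of the relations $[e_i,e_{n+1-i}]=\alpha(-1)^{i+1}e_n$ with the parity constraint on $n$; the latter is precisely Vergne's theorem and deserves either a citation or a full Jacobi-identity computation. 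As it stands the proposal is a credible plan but not a proof.
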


Here is a result of the papers \cite{AO} on decomposition of $Lb_n$ into two
disjoint classes.
\begin{theorem} \label{AO}
 Any complex $n$-dimensional filiform Leibniz algebra $L,$ obtained from naturally graded non Lie filiform Leibniz algebra, admits a basis
${e_1, e_2, ... , e_n}$ such that the table of $L$ has one of the following forms:\\
$$FLb_n =\left\{\begin{array}{lll}
[e_1, e_1] = e_3,\\
{[e_i, e_1] = e_{i+1},\ \ \qquad \qquad \qquad \qquad  \qquad \qquad \ \ \ 2 \leq i \leq n - 1,}\\
{[e_1, e_2] = \alpha_4e_4 + \alpha_5e_5 +  + \alpha_{n-1}e_{n-1} + \theta e_n,}\\
{[e_j , e_2] = \alpha_4e_{j+2} +\alpha_5e_{j+3 }+ ... + \alpha_{n+2-j}e_n, \ \ \ 2 \leq j \leq n - 2.}
\end{array}
\right.$$\ \ \

$$SLb_n=\left\{\begin{array}{lll}
[e_1, e_1] = e_3,\\
{[e_i, e_1] = e_{i+1},\ \ \qquad \qquad \qquad \qquad  \qquad \qquad \ \ \ 3 \leq i \leq n - 1,}\\
{[e_1, e_2] = \beta_3e_4 + \beta_4e_5 + .. + \beta_{n-1}e_n,}\\
{[e_2, e_2] = \gamma e_n,}\\
{[e_j , e_2] = \beta_3e_{j+2} + \beta_4e_{j+3} + ... + \beta_{�n+1-j}e_n,\ \ \ \ 3 \leq j \leq n - 2.}
\end{array}\right.$$
\end{theorem}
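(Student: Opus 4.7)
The plan is to realise $L$ as a filtered deformation of its naturally graded companion and then normalise by an adapted change of basis. Form the associated graded algebra $\mathrm{gr}(L) = \bigoplus_{i\ge 1} L^i / L^{i+1}$ with respect to the lower central series. Since $L$ is filiform with $\dim L^i = n-i$ for $2 \le i \le n$, the algebra $\mathrm{gr}(L)$ is itself an $n$-dimensional naturally graded filiform Leibniz algebra, and by Theorem~1 it is isomorphic to $NGF_1$, $NGF_2$ or $NGF_3$. Because $L$ is obtained from a \emph{non-Lie} naturally graded filiform algebra and $NGF_3$ has a manifestly antisymmetric bracket (hence is a Lie algebra), only the two cases $\mathrm{gr}(L)\cong NGF_1$ and $\mathrm{gr}(L)\cong NGF_2$ remain; these will give rise respectively to $FLb_n$ and $SLb_n$.

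Next, lift a graded basis to an adapted basis $\{e_1,\dots,e_n\}\subset L$ in which $\bar e_1,\bar e_2$ generate $L/L^2$, the relation $[e_1,e_1]=e_3$ holds on the nose, and $e_{i+1} := [e_i,e_1]$ for every $i$ where this relation already holds in $\mathrm{gr}(L)$ (so $i\ge 2$ for $NGF_1$, $i\ge 3$ for $NGF_2$). Because in both $NGF_1$ and $NGF_2$ the products $[e_1,e_2]$ and $[e_j,e_2]$ vanish, in $L$ these brackets lie in strictly higher filtration than the naive weight prediction and so admit expansions
\[
[e_1,e_2] = \sum_{k\ge 4}\alpha_k e_k,\qquad [e_j,e_2] = \sum_{k\ge j+2}\alpha_k^{(j)} e_k \quad (2\le j\le n-2),
\]
with an additional term $[e_2,e_2]=\gamma e_n$ appearing in the $NGF_2$ case (and forbidden by the filtration in the $NGF_1$ case).

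The technical heart of the argument is to impose the Leibniz identity $[x,[y,z]] = [[x,y],z]-[[x,z],y]$ on the triples $(e_j,e_2,e_1)$, together with the already-chosen relations $[e_i,e_1]=e_{i+1}$. The resulting recursion forces the coefficients $\alpha_k^{(j)}$ to depend only on the shift $k-j$, collapsing the doubly indexed family to the single sequence $\alpha_4,\alpha_5,\dots$ (respectively $\beta_3,\beta_4,\dots$) recorded in the tables $FLb_n$ and $SLb_n$. The triples $(e_1,e_1,e_2)$ and $(e_2,e_2,e_1)$ then pin $[e_2,e_2]$ to the other data, producing the extra top-weight parameter $\theta$ in $FLb_n$ and the separate parameter $\gamma$ in $SLb_n$ (where $[e_2,e_2]$ is not constrained to be any tail of the $\beta$-sequence, precisely because the recursion from $j=2$ is broken by $[e_2,e_1]=0$ in $NGF_2$).

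The final step is to exhaust the residual freedom by an adapted change of basis
\[
e_1' = e_1 + \sum_{k\ge 2}\lambda_k e_k,\qquad e_2' = e_2 + \sum_{k\ge 3}\mu_k e_k,\qquad e_i' := [e_{i-1}',e_1'],
\]
choosing the parameters $\lambda_k,\mu_k$ so as to absorb any surplus summands while preserving the normalisations $[e_1,e_1]=e_3$ and $[e_i,e_1]=e_{i+1}$. The main obstacle is precisely this bookkeeping: one must propagate the effect of each $\lambda_k,\mu_k$ through every product and verify that the stated coefficient patterns are achievable and that nothing unwanted is reintroduced. Disjointness of $FLb_n$ and $SLb_n$ is automatic, as they lift non-isomorphic graded algebras and no change of basis of $L$ can alter $\mathrm{gr}(L)$.
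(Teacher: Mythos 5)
The paper itself contains no proof of this statement: Theorem~\ref{AO} is quoted verbatim as a known result of Ayupov and Omirov \cite{AO}, so there is no in-paper argument to measure yours against. Your outline does follow the strategy of the cited source: pass to $\mathrm{gr}(L)$, note that it must be $NGF_1$ or $NGF_2$ since $NGF_3$ is a Lie algebra, and reconstruct $L$ as a filtered deformation in an adapted basis. The recursion step you gesture at is sound and can be made precise: from $[x,[e_1,e_1]]=0$ and induction one gets $[x,e_k]=0$ for all $k\ge 3$, whence $[e_{j+1},e_2]=[[e_j,e_1],e_2]=[e_j,[e_1,e_2]]+[[e_j,e_2],e_1]=[[e_j,e_2],e_1]$, which is exactly the shift collapsing the doubly indexed coefficients $\alpha^{(j)}_k$ to a single sequence. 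Two of your side remarks are off, though: in the $SLb_n$ case the reason $[e_2,e_2]=\gamma e_n$ is not the filtration but the identity $[[e_2,e_2],e_1]=[e_2,[e_2,e_1]]+[[e_2,e_1],e_2]=0$ (using $[e_2,e_1]=0$), which places $[e_2,e_2]$ in the kernel of right multiplication by $e_1$ on $L^2$, namely $\mathbb{C}e_n$; and in the $FLb_n$ case an $e_n$-term in $[e_2,e_2]$ is not ``forbidden by the filtration'' --- rather $[e_2,e_2]$ and $[e_1,e_2]$ have equal images under right multiplication by $e_1$ and hence agree up to their $e_n$-coefficients, which is why $\theta$ appears as a separate top parameter.

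The genuine gap is the step you yourself defer as ``bookkeeping'': the adapted change of basis $e_1'=e_1+\sum\lambda_ke_k$, $e_2'=e_2+\sum\mu_ke_k$, $e_i'=[e_{i-1}',e_1']$. That computation is where essentially all of the classification content lives. One must verify both that every algebra in each class can actually be brought to the displayed table (the surplus coefficients can be absorbed while preserving $[e_1,e_1]=e_3$, $[e_i,e_1]=e_{i+1}$, and $[e_2,e_1]=e_3$ or $0$) and, implicitly, that the displayed parameters $\alpha_4,\dots,\theta$ and $\beta_3,\dots,\gamma$ are what genuinely remains after normalisation. Without carrying this out you have neither established the stated normal forms nor their exhaustiveness; everything before it, while correct in outline, only reduces the theorem to that unexecuted normalisation.
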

Algebras from $FLb_n$ and $SLb_n$ we denote by $L(\alpha_4,\alpha_5,...,\alpha_{n-1},\theta)$ and $L(\beta_3,\beta_4,...,\beta_{n-1},\gamma),$ respectively.

\section{Main results}
\subsection{Derivations of graded Leibniz algebras.}
\indent\indent In this section we study the derivations of $NGF_i,$ \ \ $i=1,2,3.$ In each case we give a basis of the derivation algebra.
Let $d$ be represented by a matrix $ D=(d^l_k),$ \  $_{ k,l=1, 2, 3,..., n,}$  on the basis $ \{e_1,  e_2, ..., e_n\}$. We describe the matrix $D.$
\begin{theorem} The dimension of the derivation algebras of $NGF_1,$ $NGF_2$ and  $NGF_3$ are equal to $n+1,$ $n+2$ and $2n-1,$ respectively.
\end{theorem}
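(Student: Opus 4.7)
The approach is standard: parametrize a derivation $d$ by the matrix $D=(d^l_k)$, where $d(e_k)=\sum_{l=1}^n d^l_k e_l$, and translate the derivation identity $d([x,y])=[d(x),y]+[x,d(y)]$, applied to every ordered pair $(e_i,e_j)$ of basis vectors, into a homogeneous linear system in the entries $d^l_k$. The dimension of $\mathrm{Der}(L)$ is then the dimension of the solution space, and the strategy is to solve this system case by case and count free parameters.

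In each algebra the relations split into two families. The \emph{propagation} relations $[e_i,e_1]=e_{i+1}$ let one express the column $d(e_{i+1})$ recursively in terms of $d(e_i)$ and $d(e_1)$; iterating this from $[e_1,e_1]=e_3$ upward determines every column $d(e_k)$ with $k\ge 3$ in terms of $d(e_1)$ and $d(e_2)$. The \emph{vanishing} relations $[e_i,e_j]=0$ then impose rigid annihilation conditions that force many entries of $D$ to coincide or to vanish.

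For $NGF_1$ I would first run the recursion for $k=3,\ldots,n$ using $[e_1,e_1]=e_3$ and $[e_i,e_1]=e_{i+1}$ for $2\le i\le n-1$, and then apply the derivation identity to the zero products $[e_1,e_i]=0$ for $i\ge 2$ and $[e_i,e_j]=0$ for $i,j\ge 2$. A careful bookkeeping shows that most entries in the upper-left block of $D$ are forced to vanish, and exactly $n+1$ free parameters survive. The argument for $NGF_2$ is parallel, but the relation $[e_2,e_1]=e_3$ is \emph{absent} in $NGF_2$, so the recursive chain only starts at $i=3$; this frees one additional entry in the column $d(e_2)$ and raises the count to $n+2$.

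For $NGF_3$ the antisymmetry $[e_i,e_1]=-[e_1,e_i]=e_{i+1}$ and the top-weight brackets $[e_i,e_{n+1-i}]=\alpha(-1)^{i+1}e_n$ require processing both orderings of each pair and a case split on $\alpha\in\{0,1\}$ together with the parity of $n$. After the same recursive reduction, the surviving parameters consist of the $n-1$ coordinates of $d(e_n)$ (since $e_n$ spans the centre), the two diagonal scalars $d^1_1$ and $d^2_2$, and the $n-2$ off-diagonal entries $d^l_2$ for $3\le l\le n-1$, giving $2n-1$ in total. The main obstacle is exactly this last case: one must verify that the top-weight relations neither delete any of the parameters above when $\alpha=1$ nor introduce extra freedom when $\alpha=0$, and this hinges on tracking the alternating sign $(-1)^{i+1}$ across $2\le i\le n-1$ in conjunction with the parity hypothesis on $n$. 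Everything else is routine propagation of the recursion and solution of an essentially triangular linear system.
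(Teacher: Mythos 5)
Your overall strategy coincides with the paper's: represent $d$ by a matrix, use the products $[e_i,e_1]=e_{i+1}$ to propagate $d(e_k)$ for $k\ge 3$ recursively out of $d(e_1)$ and $d(e_2)$, and let the remaining relations cut down the entries. Your treatment of $NGF_1$ and $NGF_2$, including the observation that dropping $[e_2,e_1]=e_3$ frees exactly one more entry in the second column, matches the paper and gives the correct counts $n+1$ and $n+2$.

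The gap is in $NGF_3$, precisely the case you single out as the main obstacle: your list of surviving parameters cannot be right. Any derivation satisfies $d(L^k)\subseteq L^k$, and for a filiform algebra $L^{n-1}=\mathbb{C}e_n$, so $d(e_n)$ is a scalar multiple of $e_n$; moreover that scalar is completely determined by $d(e_1)$ and $d(e_2)$ through $e_n=[e_{n-1},e_1]$. Hence $d(e_n)$ contributes no free parameters at all, let alone $n-1$ of them (if anything, centrality of $e_n$ frees the $e_n$-\emph{row}, not the $e_n$-column, and only in its first two entries). In addition, the set $\{d_l^2 : 3\le l\le n-1\}$ has $n-3$ elements, not $n-2$, so your total $(n-1)+2+(n-2)=2n-1$ is reached only because two miscounts cancel. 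The correct free parameters, as in the paper, are the $n$ coordinates of $d(e_1)$ together with the $n-1$ coordinates $d_2^2,\dots,d_n^2$ of $d(e_2)$ (the coefficient $d_1^2$ is killed by applying the derivation identity to $[e_2,e_j]=0$ with $j\ge 3$), and one then checks that the top-weight brackets $[e_i,e_{n+1-i}]=\alpha(-1)^{i+1}e_n$ only modify the $e_n$-component of the propagated columns without imposing new conditions on these $2n-1$ quantities. With that correction the argument closes; as written, the bookkeeping for $NGF_3$ would not verify.
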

\begin{proof}
Let us start from $NGF_1.$
We take $ d(e_j)=\sum\limits_{i=j}^n d_i^je_i,$ where $ j=1, 2.$ Since, $ [e_1, e_1]=e_3$, we have
$$d(e_3)=[d(e_1), e_1]+[e_1, d(e_1)]=\left[\sum_{i=1}^nd_i^1e_i, e_1\right]+\left[e_1, \sum_{i=1}^nd_i^1e_i\right]$$
$$=d_1^1[e_1, e_1]+d_2^1[e_2, e_1]+\left[\sum_{i=3}^nd_i^1e_i, e_1\right]+d_1^1[e_1, e_1] =(2d_1^1+d_2^1)e_3+\sum_{i=3}^{n-1}d_i^1e_{i+1}.$$
Therefore,
\begin{equation}\label{1} d(e_3)=(2d_1^1+d_2^1)e_3+\sum_{i=3}^{n-1}d_i^1e_{i+1}.\end{equation}
From $[e_2, e_1]=e_3 $ we find
$$ d(e_3)=[d(e_2), e_1]-[e_2, d(e_1)]
=\left[\sum_{i=2}^nd_i^2e_i, e_1\right]+\left[e_2, \sum_{i=1}^nd_i^1e_i\right]$$
$$=d_2^2[e_2, e_1]+\left[\sum_{i=3}^nd_i^2e_i, e_1\right]+d_1^1[e_2, e_1]
 =(d_1^1+d_2^2)e_3+\sum_{i=3}^{n-1}d_i^2e_{i+1}.$$
Hence,
\begin{equation}\label{2}d(e_3)=(d_1^1+d_2^2)e_3+\sum_{i=3}^{n-1}d_i^2e_{i+1}.\end{equation}
Comparing  (\ref{1}) and (\ref{2}) we obtain \\
\begin{equation}d_2^2=d_1^1+d_2^1\  \  \text{and}\ \
 d_i^2=d_i^1 \ \ \text {for}\ \ 3\leq i \leq n-1.\end{equation}
According to the table of multiplication of $NGF_1$, one has $ [e_3, e_1]=e_4$. Thus
$$ d(e_4)=[d(e_3), e_1]-[e_3, d(e_1)]$$
$$=\left[(2d_1^1+d_2^1)e_3+\sum_{i=3}^{n-1}d_i^2e_{i+1}, e_1\right]+\left[e_3, \sum_{i=1}^nd_i^1e_i\right]$$
$$=(2d_1^1+d_2^1)e_4+\sum_{i=3}^{n-2}d_i^2e_{i+2}+d_1^1[e_3, e_1]
=(2d_1^1+d_2^1)e_4+\sum_{i=3}^{n-2}d_i^2e_{i+2}+d_1^1e_4$$
$$=(3d_1^1+d_2^1)e_4+\sum_{i=3}^{n-2}d_i^2e_{i+2}.$$
Therefore,
\begin{equation}d(e_4)=(3d_1^1+d_2^1)e_4+\sum_{i=5}^{n}d_{i-2}^2e_i. \end{equation}
For $ k \geq 5$ one can find
\begin{equation}\label{5}d(e_k)=((k-1)d_1^1+d_2^1)e_k+\sum_{i=k+1}^{n}d_{i-k+2}^2e_i. \end{equation}
 Indeed,  it is true for $ k=4$. Suppose that it is true for $k$ and show that it is the case for $k+1.$ Considering  $e_{k+1}=[e_k, e_1]$
we have
$$d(e_{k+1}) =[d(e_k), e_1]+[e_k,d(e_1)]$$
$$ =\left[((k-1)d_1^1+d_2^1)e_k+\sum_{i=k+1}^{n}d_{i-k+2}^2e_i, e_1\right]+\left[e_k,\sum_{i=1}^nd_i^1e_i\right]$$
$$ =((k-1)d_1^1+d_2^1)e_{k+1}+\sum_{i=k+1}^{n-1}d_{i-k+2}^2e_{i+1}+d_1^1[e_k, e_1]$$
$$ =((k-1)d_1^1+d_2^1)e_{k+1}+\sum_{i=k+2}^{n}d_{i-k+1}^2e_{i}+d_1^1e_{k+1}$$
$$ =(kd_1^1+d_2^1)e_{k+1}+\sum_{i=k+2}^{n}d_{i-k+1}^2e_{i}.$$
Hence, we get
$$d(e_{k+1})=(kd_1^1+d_2^1)e_{k+1}+\sum_{i=k+2}^{n}d_{i-k+1}^2e_{i}.$$\\
In fact, $e_n=[e_{n-1}, e_1]$
therefore,\\
$$d( e_n)=[d(e_{n-1}), e_1]+[e_{n-1}, d(e_1)]. $$
We substitute $k$ by $n-1$  in (\ref{5}) and obtain $ d(e_{n-1})=((n-2)d_1^1+d_2^1)e_{n-1}+d_3^2e_n.$\\
Therefore,
$$d( e_n)=[((n-2)d_1^1+d_2^1)e_{n-1}+d_3^2e_n, e_1]+\left[e_{n-1}, \sum_{i=1}^nd_i^1e_i\right] $$
$$=((n-2)d_1^1+d_2^1)e_n+d_1^1[e_{n-1}, e_1]. $$
That is,
$$d( e_n)=((n-1)d_1^1+d_2^1)e_n .$$
The matrix of $d$ on the basis $\{e_1, e_2, e_3, ..., e_n\}$ has the following form:
\[\left[ {\begin{array}{*{20}{c}}
  {{d_1^1}}& 0 &0 &\dots&0&0\\
  {{d_2^1}}& {{d_1^1+d_2^1}} &0 &\dots&0&0\\
 {{d_3^1}}& {{d_3^1}} & {{2d_1^1+d_2^1}} &\dots&0&0\\
  {{d_4^1}}& {{d_4^1}} & {{d_3^1}} &\dots&0&0\\
  \vdots & \vdots & \vdots & \vdots & \vdots & \vdots\\
  {{d_{n-2}^1}}& {{d_{n-2}^1}} & {{d_{n-3}^1}} &\dots&0&0\\
  {{d_{n-1}^1}}& {{d_{n-1}^1}} & {{d_{n-2}^1}} &\dots&{{(n-2)d_1^1+d_2^1}}&0\\
  {{d_n^1}}& {{d_n^2}} & {{d_{n-1}^1}} &\dots&{{d_3^1}}&{{(n-1)d_1^1+d_2^1}}\\
\end{array}} \right].\]
Consider the following system of vectors:
$$\begin{array}{lll}
 v_1={E_{11}+\sum\limits_{i=2}^{n}(i-1)E_{ii}}, \\[3mm]
 v_k={E_{k1}+\sum\limits_{i=2}^{n}E_{i,{i-k+2}}}, \ \ \  2\leq k \leq n-1.\\[3mm]
 v_{n}={E_{n1}}, \\[3mm]
 v_{n+1}={E_{n2}},
  \end{array}$$ \\
 where $ E_{ij}$ is the matrix with zero entries except for the element $a_{ij}=1$.
It is easy to see that the set $  \{v_1, v_2, v_3, ... v_{n+1}\}$ presents a basis of $Der\ (NGF_1),$ therefore, $dim\ Der\ (NGF_1)=n+1.$

Next, we describe the derivation algebra of $NGF_2$. Let $ d(e_j)=\sum\limits_{i=j}^nd_i^je_i,$ where $ j=1, 2.$\\
Since, $ [e_1, e_1]=e_3$  then
$$d(e_3)=[d(e_1), e_1]+[e_1, d(e_1)]
=\left[\sum_{i=1}^nd_i^1e_i, e_1\right]+\left[e_1, \sum_{i=1}^nd_i^1e_i\right]$$
$$=d_1^1[e_1, e_1]+d_2^1[e_2, e_1]+\left[\sum_{i=3}^nd_i^1e_i, e_1\right]+d_1^1[e_1, e_1]=2d^1_1e_3+\sum_{i=3}^{n-1}d_i^1e_{i+1}.$$
 If one uses $[e_2, e_1]=0$ then
$$ 0=[d(e_2), e_1]+[e_2, d(e_1)]=\left[\sum_{i=2}^nd_i^2e_i, e_1\right]+\left[e_2, \sum_{i=1}^nd_i^1e_i\right]  $$
$$=d_2^2[e_2, e_1]+\left[\sum_{i=3}^nd_i^2e_i, e_1\right]+d_1^1[e_2, e_1]=\sum_{i=3}^{n-1}d_i^2e_{i+1}.$$
Therefore,
\begin{equation} d^2_i=0 \ \text{for}\ \ 3\leq i \leq n-1.\end{equation}
Because of $[e_3, e_1]=e_4$ we find
$$ d(e_4)=[d(e_3), e_1]+[e_3,d(e_1)]=\left[2d^1_1e_3+\sum_{i=4}^{n}d_{i-1}^1e_{i}, e_1\right]+\left[e_3, \sum_{i=1}^nd_i^1e_i\right]$$
$$ =2d^1_1e_4+\sum_{i=4}^{n-1}d_{i-1}^1e_{i+1}+d_1^1e_4=3d^1_1e_4+\sum_{i=5}^{n}d_{i-2}^1e_{i}.$$
Similarly, \\
\begin{equation}\label{8} d(e_k)=(k-1)d^1_1e_k+\sum_{i=k+1}^{n-1}d_{i-k+2}^1e_i, \ \ 4\leq k\leq n-1.\end{equation}
 Then the matrix of $d$ has the form
$$ D=(d_k^l)_{{k,l}=1, 2, 3 ,...n}  $$ where
$$\begin{array}{lll}
d^1_i\neq 0, \ \mbox{ for} \ \  1\leq i\leq n\\
d^2_1= 0, \ \text{ and}  \ \ d^2_i=0 \ \text{ for} \ \ 3\leq i\leq n-1\\
d^2_n\neq 0\ \mbox{ and}  \ \ d^2_2\neq 0, \\
d^3_1=d^3_2=0, \ \text{ and} \ \ d^3_3=2d^1_1, \\
d^3_i=d^1_{i-1} \ \text{for} \ \  4\leq i \leq n-1. \end{array}
$$
From the view of $D$ it is easy to conclude that $dim\ Der(NGF_2)= n+2. $

Let us now consider the derivation algebra of $ NGF_3 .$ We take $ d(e_j)=\sum\limits_{i=j}^nd_i^je_i,$ where $ j=1, 2.$
Then due to $ [e_2, e_1]=e_3$ one has
$$d(e_3)=[d(e_2), e_1]+[e_2, d(e_1)]=\left[\sum_{i=2}^nd_i^2e_i, e_1\right]+\left[e_2, \sum_{i=1}^nd_i^1e_i\right] $$

$$=d_2^2[e_2, e_1]+\left[\sum_{i=3}^nd_i^2e_i, e_1\right]+d_1^1[e_2, e_1]+d_{n-1}^1[e_2,e_{n-1}]$$
$$=d_2^2e_3+\sum_{i=3}^{n_1}d_i^2e_{i+1}+d_1^1e_3-\alpha d_{n-1}^1 e_n$$
$$=(d_1^1+d_2^2)e_3+\sum_{i=4}^{n-1}d_{i-1}^2e_i+(d_{n-1}^2-\alpha d_{n-1}^1) e_n.$$
Hence,
\begin{equation}d(e_3)=(d_1^1+d_2^2)e_3+\sum_{i=4}^{n-1}d_{i-1}^2e_i+(d_{n-1}^2-\alpha d_{n-1}^1) e_n. \end{equation}
Consider $e_4=[e_3, e_1]$ then
$$d(e_4)=[d(e_3), e_1]+[e_3, d(e_1)]$$
$$ =\left[(d_1^1+d_2^2)e_3+\sum_{i=4}^{n-1}d_{i-1}^2e_i+(d_{n-1}^2-\alpha d_{n-1}^1) e_n , e_1\right]+\left[e_3, \sum_{i=1}^nd_i^1e_i\right] $$
$$ =(d_1^1+d_2^2)e_4+\sum_{i=4}^{n-1}d_{i-1}^2e_{i+1}+d_1^1[e_3, e_1]+d_{n-2}^1[e_3,e_{n-2}]$$
$$ =(d_1^1+d_2^2)e_4+\sum_{i=4}^{n-1}d_{i-1}^2e_{i+1}+d_1^1e_4+\alpha d_{n-2}^1 e_n$$
$$ =(2d_1^1+d_2^2)e_4+\sum_{i=5}^{n-1}d_{i-2}^2e_i+(d_{n-2}^2+\alpha d_{n-2}^1)e_n.$$
Therefore,
\begin{equation}d(e_4)=(2d_1^1+d_2^2)e_4+\sum_{i=5}^{n-1}d_{i-2}^2e_i+(d_{n-2}^2+\alpha d_{n-2}^1 )e_n .\end{equation}
Similarly,
\begin{equation}\label{11}d(e_k)=((k-2)d_1^1+d_2^2)e_k+\sum_{i=k+1}^{n-1}d_{i-k+2}^2e_i+(d_{n-k+2}^2+\alpha
(-1)^{i+1}d_{n-k+2}^1)e_n, \ \ 4\leq k\leq n-1. \end{equation}
Then the matrix of derivations has the form
$$ D=(d_k^l)_{{k,l}=1, 2, 3 ,...n.} $$ where
$$\begin{array}{lll}
d^1_i\neq 0, \ \text{for} \ \  1\leq i\leq n\\
d^2_i\neq 0, \ \text{ for} \ \  2\leq i\leq n\\
d^i_{ii}=(i-2)d_1^1+d_2^2,\ \text{ for} \ \  2\leq i \leq n-1\\
d^{i+1}_j=d^i_{j-1}, \ \text{for} \ \  2\leq i \leq n-1\ \text{and} \ \ 4\leq j \leq n-i
\end{array}
$$
Thus the dimension of $ Der(NGF_3)$ is $ 2n-1. $
\end{proof}
\subsection{Derivations of filiform Leibniz algebras.}
Now we study the derivations of classes from Theorem \ref{AO}.
 \begin{theorem} The dimensions of the derivation algebras of $FLb_n$ are equal to $n-1,$ $n$ or $n+1.$
 \end{theorem}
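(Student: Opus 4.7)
The plan is to mirror the technique used for $NGF_1$. Since the relations $[e_1,e_1]=e_3$ and $[e_i,e_1]=e_{i+1}$ for $2\le i\le n-1$ already show that $\{e_1,e_2\}$ generates $L=L(\alpha_4,\ldots,\alpha_{n-1},\theta)$, any derivation $d$ is uniquely determined by the two vectors
$$d(e_1)=\sum_{i=1}^{n}a_i e_i,\qquad d(e_2)=\sum_{i=1}^{n}b_i e_i.$$
First I would apply $d$ to the equalities $[e_1,e_1]=e_3=[e_2,e_1]$; comparing the two expressions for $d(e_3)$ reproduces exactly the constraints encountered in the $NGF_1$ analysis, namely $b_2=a_1+a_2$ and $b_i=a_i$ for $3\le i\le n-1$. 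Then, by induction on $k$ via $e_{k+1}=[e_k,e_1]$, I would obtain closed-form expressions for $d(e_k)$, $k\ge 3$, in terms of the free parameters $a_1,a_2,a_3,\ldots,a_n,b_n$. This already gives the upper bound $\dim\mathrm{Der}(L)\le n+1$, attained when all extra structure constants vanish (the $NGF_1$ case).

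The second step is to feed these expressions into the remaining defining relations
$$[e_1,e_2]=\alpha_4 e_4+\cdots+\alpha_{n-1}e_{n-1}+\theta e_n,\qquad [e_j,e_2]=\sum_{k=4}^{n+2-j}\alpha_k e_{j+k-2},$$
and to collect the resulting linear conditions on $(a_1,\ldots,a_n,b_n)$. Comparing coefficients of $e_m$ on both sides produces a system of linear equations whose coefficients are polynomial functions of $(\alpha_4,\ldots,\alpha_{n-1},\theta)$. The expected outcome is that the top-degree coefficient $e_n$ in $d([e_1,e_2])$ and in $d([e_2,e_2])$-type identities produces one scalar condition, while the coefficient of the leading term $e_{j+2}$ in $d([e_j,e_2])$ for small $j$ produces a second; the remaining equations should then be consequences of these through the inductive formulas for $d(e_k)$.

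Finally, I would examine the rank of this extra constraint system as a function of the parameters $(\alpha_4,\ldots,\alpha_{n-1},\theta)$. Three strata appear:
\begin{itemize}
\item rank $0$: all constraints vanish identically, so $\dim\mathrm{Der}(L)=n+1$;
\item rank $1$: exactly one non-trivial linear condition on $(a_1,\ldots,a_n,b_n)$ survives, giving $\dim\mathrm{Der}(L)=n$;
\item rank $2$: two independent conditions survive, giving $\dim\mathrm{Der}(L)=n-1$.
\end{itemize}
Identifying the precise inequalities on $(\alpha_4,\ldots,\theta)$ that separate these strata is straightforward bookkeeping once the two ``key'' constraints are isolated.

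The main obstacle is controlling the rank of the extra constraint matrix: I expect one has to show that, modulo the $NGF_1$-relations derived in the first step, every equation obtained from $[e_j,e_2]$ with $j\ge 3$ is a linear combination of the equation coming from $[e_1,e_2]$ (coefficient of $e_n$) and the equation coming from $[e_2,e_2]$ (coefficient of $e_4$). Proving this reduction, and hence that the maximal drop in dimension is $2$, is the crucial computational step. Once that is established, exhibiting concrete algebras realising each of the dimensions $n-1$, $n$, $n+1$ (by choosing the $\alpha_i$ and $\theta$ appropriately) finishes the proof.
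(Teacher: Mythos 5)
Your framework coincides with the paper's: a derivation is determined by $d(e_1)$ and $d(e_2)$, the bracket relations impose linear constraints on those $2n$ coefficients, and the dimension is read off from the rank of the constraint system. But the proposal stops exactly where the actual proof has to begin. The whole content of the theorem is (a) that beyond the constraints already present for $NGF_1$ the relations $[e_j,e_2]=\cdots$ never cut down more than two further independent parameters, and (b) the identification of which values of $(\alpha_4,\dots,\alpha_{n-1},\theta)$ realize rank $0$, $1$ or $2$. You state both as expectations (``I expect one has to show\dots'', ``the remaining equations should then be consequences\dots'') and explicitly defer the verification. That verification is the proof; without it you have only the trivial upper bound together with a conjecture about the lower bound. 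The paper, for comparison, tabulates the strata explicitly and carries out one representative case in full ($\theta\neq 0$ and all $\alpha_i\neq 0$), where the surviving extra constraints turn out to be $d_1^2=0$ and $d_2^2=2d_1^1$, forcing $d_2^1=d_1^1$ and giving dimension $n$; the other strata are handled by the same computation. You would need to do at least this much, plus exhibit parameter choices realizing $n-1$ and $n+1$.

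Two smaller inaccuracies in the part you do carry out. First, comparing the two expressions for $d(e_3)$ obtained from $[e_1,e_1]=e_3$ and $[e_2,e_1]=e_3$ does \emph{not} reproduce exactly the $NGF_1$ relations: since $[e_1,e_2]$ and $[e_2,e_2]$ are nonzero in $FLb_n$, the terms $d_2^1[e_1,e_2]$ and $d_2^1[e_2,e_2]$ contribute, and while they cancel in the middle range they do not cancel in the top coefficient, which acquires a term of the form $d_2^1(\theta-\alpha_n)$; so the relation $b_i=a_i$ fails at $i=n-1$ in general. Second, the first step alone does not give $\dim\mathrm{Der}(L)\le n+1$: after using only $[e_1,e_1]$ and $[e_2,e_1]$ the coefficient $b_1$ of $e_1$ in $d(e_2)$ is still unconstrained (it is eliminated only by the $e_3$-component of the identity $d([e_1,e_2])=[d(e_1),e_2]+[e_1,d(e_2)]$, which produces $b_1e_3$ on the right and nothing in degree $3$ on the left), so at that stage you have $n+2$ free parameters, not $n+1$.
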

 \begin{proof}
 Depending on constraints for the structure constants $\alpha_4,\alpha_5,...,\alpha_{n-1}$ and $\theta$ we have the following distribution for dimensions of the derivation algebras of elements from $FLb_n:$
  $$dim Der(\emph{L}) =\left\{\begin{array}{lll}
 n+1,\  \ \text{if}\ \  1. \ \  \theta=0 \ \text{and} \  \alpha_i=0, \ \ 4\leq i \leq{n-1}.\\
\qquad \quad \ \ \ \ \ 2.\ \  \theta\neq 0,\  \alpha_4\neq0,\ \alpha_5\neq0, \ \text{and there exists}\ i \in\{6, 7, .., n-1\}\\
\hfill \text{such that}\  \alpha_i\neq 0,\ \alpha_j=0 \ \text{for}\ j \neq i. \\
\qquad \quad \ \ \ \ \ 3.\ \   \theta= 0,\  \alpha_4\neq0,\ \alpha_5\neq0, \ \text{and there exists}\ \ i\in\{6, 7, .., n-1\}\\
\hfill \text{such that}\  \alpha_i\neq 0,\ \alpha_j=0 \ \text{for}\ j \neq i. \\
\qquad \quad \ \ \ \ \ 4.\ \  \theta\neq 0,\  \alpha_4=0,\ \alpha_5=0, \ \text{and there exists}\ i \in\{6, 7, .., n-1\}\\
\hfill \text{such that}\  \alpha_i\neq 0,\ \alpha_j=0 \ \text{for}\ j \neq i. \\
\qquad \quad \ \ \ \ \ 5.\ \  \theta= 0,\  \alpha_4=0,\ \alpha_5=0, \ \text{and there exists}\ i \in\{6, 7, .., n-1\}\\
\hfill \text{such that}\  \alpha_i\neq 0,\ \alpha_j=0 \ \text{for}\ j \neq i. \\
\qquad \quad \ \ \ \ \ 6.\ \  \theta\neq 0,  \ \text{and there exists}\ i \in\{4, 5, 6, .., n-1\}\\
\hfill \text{such that}\  \alpha_i= 0,\ \alpha_j\neq 0 \ \text{for}\ j \neq i. \\
\qquad \quad \ \ \ \ \ 7.\ \  \theta= 0,  \ \text{and there exists}\ i \in\{4, 5, 6, .., n-1\}\\
\hfill \text{such that}\  \alpha_i= 0,\ \alpha_j\neq 0 \ \text{for}\ j \neq i. \\
\qquad \ \ \ \ \ \ \ \ 8.\  \ \theta\neq 0,\ \alpha_4\neq0,\ \alpha_5\neq0, \alpha_6=0,  \ \text{and there exists} \ \ \ell\in\{7, 8, .., n-1\}\\ \hfill \text{such that}\  \alpha_i\neq0,\ \text{for all}\ i\geq \ell \ \text{and} \ \  \alpha_i= 0,\ \text{if}\ i < \ell.\\
\qquad \ \ \ \ \ \ \ \ 9.\  \ \theta= 0,\ \alpha_4\neq0,\ \alpha_5\neq0, \alpha_6=0,  \ \text{and there exists} \ \ \ell\in\{7, 8, .., n-1\}\\ \hfill \text{such that}\  \alpha_i\neq0,\ \text{for all}\ i\geq \ell \ \text{and} \ \  \alpha_i= 0,\ \text{if}\ i < \ell.\\
 n ,\ \ \ \ \quad \text{if}\ \  1.\ \ \theta\neq 0, \ \text{and} \  \alpha_i\neq 0, \ \ 4\leq i \leq {n-1}.\\
\qquad \quad \ \ \ \ \ 2.\ \ \theta\neq 0, \ \text{and} \  \alpha_i= 0, \ \ 4\leq i \leq {n-1}.\\
\qquad \quad \ \ \ \ \ 3.\ \ \theta= 0,\ \alpha_4 \neq 0,\ \alpha_5 \neq 0,  \ \text{and}\  \alpha_i= 0,  \ \ 6\leq i \leq {n-1}.\\
\qquad \ \ \ \ \ \ \ \ 4.\  \ \theta\neq 0,\ \alpha_4=0  \ \text{and there exists} \ \ \ell\in\{5, 6, 7, 8, .., n-1\}\\ \hfill \text{such that}\  \alpha_i\neq0,\ \text{for all}\ i\geq \ell \ \text{and} \ \  \alpha_i= 0,\ \text{if}\ i < \ell.\\
  n-1,\ \ \text{if}\ \ 1.\ \ \theta\neq 0,\ \alpha_4 \neq 0,\ \alpha_5 \neq 0,  \ \text{and}\ \  \alpha_i= 0,  \ \ 6\leq i \leq {n-1}.\\
\qquad \quad \ \ \ \ \ 2.\ \ \theta\neq 0,\ \alpha_4 = 0,\ \alpha_5 \neq 0,  \ \text{and}\ \ \alpha_i= 0,  \ \ 6\leq i \leq {n-1}.\\
\end{array}
\right.$$
 We shall treat only one case, where $ \theta\neq 0, \alpha_i\neq 0,$ for $ 4\leq i \leq n-1.$ The others cases are similar. Put $ d(e_j)=\sum\limits_{i=j}^nd_i^je_i,$ where $ j=1, 2.$
Then owing to  $ [e_1, e_1]=e_3$ one has
$$d(e_3)=[d(e_1), e_1]+[e_1, d(e_1)]=\left[\sum_{i=1}^nd_i^1e_i, e_1\right]+\left[e_1, \sum_{i=1}^nd_i^1e_i\right]$$
$$=d_1^1[e_1, e_1]+d_2^1[e_2, e_1]+\left[\sum_{i=3}^nd_i^1e_i, e_1\right]+d_1^1[e_1, e_1]+d_2^1[e_1, e_2]$$
$$ =(2d_1^1+d_2^1)e_3+\sum_{i=3}^{n-1}d_i^1e_{i+1}+d_2^1\sum_{i=3}^{n-1}(\alpha_i e_i+\theta e_n)$$
$$ =(2d_1^1+d_2^1)e_3+\sum_{i=4}^{n-1}(d_{i-1}^1+d_2^1\alpha_i)e_i+(d_{n-1}^1+d_2^1\theta )e_n.$$
Due to $ [e_2, e_1]=e_3$ we have
$$d(e_3)=[d(e_2), e_1]+[e_2, d(e_1)]=\left[\sum_{i=1}^nd_i^2e_i, e_1\right]+\left[e_2, \sum_{i=1}^nd_i^1e_i\right]$$
$$=d_2^2[e_2, e_1]+\left[\sum_{i=3}^nd_i^2e_i, e_1\right]+d_1^1[e_2, e_1]+d_2^1[e_2,e_2]=d_2^2e_3+\sum_{i=3}^{n-1}d_i^2e_{i+1}+d_1^1e_3+d_2^1\sum_{i=4}^n\alpha_i e_i$$
$$=(d_1^1+d_2^2)e_3+\sum_{i=4}^{n}d_{i-1}^2e_i+d_2^1\sum_{i=4}^n\alpha_i e_i=(d_1^1+d_2^2)e_3+\sum_{i=4}^{n-1}(d_{i-1}^2+d_2^1\alpha_i )e_i+(d_{n-1}^2+d_2^1\alpha_n )e_n.$$
Comparing the last two expressions for $d(e_3)$  we obtain
\begin{equation}\label{11} d_2^2=d_1^1+d_2^1,  d_i^2=d_i^1 , \  \text{for}\ \ 3\leq i \leq n-1 \ \text{and} \ \  d_{n-1}^2=d_{n-1}^1+d_2^1(\theta-\alpha_n).  \end{equation}
From $ [e_3, e_1]=e_4$ one has
$$d(e_4)=[d(e_3), e_1]+[e_3, d(e_1)]$$
$$=\left[(d_1^1+d_2^2)e_3+\sum_{i=4}^{n-1}(d_{i-1}^2+d_2^1\alpha_i)e_i
+(d_{n-1}^2+d_2^1\alpha_n )e_n, e_1\right]+\left[e_3, \sum_{i=1}^nd_i^1e_i\right]$$
$$=(d_1^1+d_2^2)e_4+\sum_{i=4}^{n-1}(d_{i-1}^2+d_2^1\alpha_i)e_{i+1}
+d_1^1[e_3, e_1]+d_2^1[e_3, e_2]$$
$$=(d_1^1+d_2^2)e_4+\sum_{i=4}^{n-1}(d_{i-1}^2+d_2^1\alpha_i)e_{i+1}
+d_1^1e_4+d_2^1\sum_{i=4}^{n-1}\alpha_ie_{i+1}$$
$$=(2d_1^1+d_2^2)e_4+\sum_{i=5}^{n}(d_{i-2}^2+2d_2^1\alpha_{i-1})e_i.$$
Let consider $ [e_4, e_1]=e_5.$ Then,
$$d(e_5)=[d(e_4), e_1]+[e_4, d(e_1)]$$
$$=\left[(2d_1^1+d_2^2)e_4+\sum_{i=5}^{n}(d_{i-2}^2+2d_2^1\alpha_{i-1})e_i, e_1\right]+\left[e_4, \sum_{i=1}^nd_i^1e_i\right]$$
$$=\left[(2d_1^1+d_2^2)e_4+\sum_{i=5}^{n}(d_{i-2}^2+2d_2^1\alpha_{i-1})e_i, e_1\right]+
d_1^1[e_4, e_1]+d_2^1[e_4, e_2]$$
$$=(2d_1^1+d_2^2)e_5+\sum_{i=5}^{n-1}(d_{i-2}^2+2d_2^1\alpha_{i-1})e_{i+1}+d_1^1e_5
+d_2^1\sum_{i=4}^{n-2}\alpha_ie_{i+2}$$
$$=(3d_1^1+d_2^2)e_5+\sum_{i=5}^{n-1}(d_{i-2}^2+2d_2^1\alpha_{i-1})e_{i+1}
+d_2^1\sum_{i=5}^{n-1}\alpha_{i-1}e_{i+1}$$
$$=(3d_1^1+d_2^2)e_5+\sum_{i=5}^{n-1}(d_{i-2}^2+3d_2^1\alpha_{i-1})e_{i+1}
$$
$$=(3d_1^1+d_2^2)e_5+\sum_{i=6}^{n}(d_{i-3}^2+3d_2^1\alpha_{i-2})e_i.
$$
Similarly,
\begin{equation}\label{12}d(e_k)=((k-2)d_1^1+d_2^2)e_k+\sum_{i=k+1}^{n}(d_{i-k+2}^2+(k-2)d_2^1\alpha_{i-k+3})e_i . \end{equation}
From, $ e_n=[e_{n-1}, e_1]$ we get $$ d(e_n)=[d(e_{n-1}), e_1]+[e_{n-1}, d(e_1)]. $$
The substitution $k$ by $ n-1 $ in (\ref{12}) gives $ d(e_{n-1})={((n-3)d_1^1+d_2^2)e_{n-1}+(d_3^2+(n-3)d_2^1\alpha_4)e_n}$\\
and then
$$ d(e_n)=\left[((n-3)d_1^1+d_2^2)e_{n-1}+(d_3^2+(n-3)d_2^1\alpha_4)e_n, e_1\right]+d_1^1[e_{n-1}, e_1] $$
$$ =\left((n-3)d_1^1+d_2^2\right)e_n+d_1^1e_n .$$
As a result one has
\begin{equation} \label{13} d(e_n)=\left((n-2)d_1^1+d_2^2\right)e_n. \end{equation}
On the other hand,  $$ [e_{n-2}, e_2]=\alpha_4e_n.$$ Notice that
 $ \alpha_4 \neq 0, $ therefore $$d(e_n)=\frac{1}{\alpha_4}d([e_{n-2}, e_2]). $$
This implies that $$d(e_n)=\frac{1}{\alpha_4}([d(e_{n-2}), e_2]+[e_{n-2}, d(e_2)]  ).$$
We substitute $k$ by $ n-2 $ in (\ref{12}), to obtain \\
 $$d(e_{n-2})=((n-4)d_1^1+d_2^2)e_{n-2}+
\sum\limits_{i=n-1}^{n}(d_{i-n+4}^2+(n-4)d_2^1\alpha_{i-n+5})e_i.$$
 Then

 $d(e_n)=\frac{1}{\alpha_4}\left(\left[((n-4)d_1^1+d_2^2)e_{n-2}+
\sum\limits_{i=n-1}^{n}(d_{i-n+4}^2\right.\right.$

 \qquad \qquad \qquad \qquad \qquad $\left.\left.+(n-4)d_2^1\alpha_{i-n+5})e_i, e_2\right]+[e_{n-2}, d(e_2)]  \right)$

\qquad \qquad $=\frac{1}{\alpha_4}\left(\left[((n-4)d_1^1+d_2^2)e_{n-2}+
\sum\limits_{i=n-1}^{n}(d_{i-n+4}^2\right.\right.$

\qquad \qquad \qquad \qquad $\left.\left.+(n-4)d_2^1\alpha_{i-n+5})e_i, e_2\right]
+d_2^2[e_{n-2}, e_2]  \right)$

$$=\frac{1}{\alpha_4}\left(((n-4)d_1^1+d_2^2)\alpha_4e_n
+d_1^2e_{n-1}+d_2^2\alpha_4e_n  \right)$$
$$=\frac{1}{\alpha_4}\left(((n-4)d_1^1+d_2^2)\alpha_4e_n+
+d_1^2e_{n-1}+d_2^2\alpha_4e_n  \right)$$
$$=\frac{d_1^2}{\alpha_4}e_{n-1}+\left((n-4)d_1^1+2d_2^2\right)e_n.$$
Thus\\
\begin{equation}\label{14}d(e_n)=\frac{d_1^2}{\alpha_4}e_{n-1}+\left((n-4)d_1^1+2d_2^2\right)e_n.\end{equation}
Comparing  (\ref{13}) and (\ref{14})  we obtain
\begin{equation}\label{15} d_1^2=0 , d_2^2= 2d_1^1.\end{equation}

The matrix of $ d $  has the form $ D=(d_k^l)_{k,l=1, 2, 3 ,...n},  $ where
$$d^1_2=d^1_1,\
 d^2_2= 2d_1^1,\
  d_1^2=0,\
 d^2_i=d^1_i , \ \ \  3\leq i\leq n-2,\
d_{n-1}^2=d_{n-1}^1+d_2^1(\theta+\alpha_n).  $$
Hence, in this case the dimension of $ Der(L)$ for $L\in FLb_n$ is $n.$
\end{proof}

Now we describe the derivation algebra of elements from $ SLb_n.$
\begin{theorem} The dimensions of the derivation algebras for elements of $SLb_n$ vary between $n-1$ and $n+2.$
\end{theorem}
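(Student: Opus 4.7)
The plan is to follow the same blueprint as in the preceding theorem for $FLb_n$: write $d(e_1)=\sum_{i=1}^n d_i^1 e_i$ and $d(e_2)=\sum_{i=2}^n d_i^2 e_i$, propagate $d$ to the remaining basis vectors through the relations $e_{k+1}=[e_k,e_1]$ valid for $3\leq k\leq n-1$, and then exploit the remaining brackets to extract linear constraints on the parameters $d_i^j$.

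The first computation uses $[e_1,e_1]=e_3$ to express $d(e_3)$ in terms of the $d_i^1$, the $\beta_i$ and $\gamma$ (the $\beta_i$ enter because $[e_1,e_2]=\beta_3 e_4+\dots+\beta_{n-1}e_n$). The crucial structural difference with $FLb_n$ is that in $SLb_n$ we have $[e_2,e_1]=0$, since the rule $[e_i,e_1]=e_{i+1}$ is posted only for $i\geq 3$; applying $d$ to this identity yields a first batch of constraints on the $d_i^2$ and on $d_1^1$. A second batch comes from $[e_2,e_2]=\gamma e_n$, and a third from $[e_j,e_2]=\beta_3 e_{j+2}+\dots+\beta_{n+1-j}e_n$ for $3\leq j\leq n-2$. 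An induction on $k$ then gives a closed formula for $d(e_k)$ analogous to (\ref{12}), with the $\beta_i$ in place of the $\alpha_i$ and an index shift reflecting the new range of the $[e_j,e_2]$ relation.

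The final step is to read $\dim Der(L)$ off the resulting linear system according to the vanishing pattern of $(\beta_3,\beta_4,\dots,\beta_{n-1},\gamma)$. Compared with $FLb_n$, algebras in $SLb_n$ carry one additional free structure constant $\gamma$, while the identity $[e_2,e_1]=0$ (replacing $[e_2,e_1]=e_3$) removes one constraint that was present in $FLb_n$; together these two shifts account for the upper bound rising from $n+1$ to $n+2$. The extremal value $\dim Der(L)=n+2$ is attained in the fully degenerate case $\beta_3=\dots=\beta_{n-1}=\gamma=0$, which is exactly $NGF_2$, consistent with the first theorem of the paper. At the other extreme, a nonzero $\beta_3$ combined with a nonzero $\gamma$ or $\beta_4$ forces vanishing of $d_1^2$ and fixes $d_2^2$ in terms of $d_1^1$, which accounts for the lower bound $n-1$.

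The main obstacle is the bookkeeping of the case analysis, exactly as in the $FLb_n$ proof: one must enumerate the configurations of vanishing and non-vanishing $\beta_i,\gamma$, and for each configuration determine which coefficients $d_i^j$ are free and which are forced. I would organise the four cases $\dim Der(L)=n-1,n,n+1,n+2$ separately, in each case exhibiting an explicit basis of $Der(L)$ in matrix form (along the pattern of the matrices displayed for $NGF_1$ and $NGF_2$) and verifying its cardinality, treating in detail only the generic subcase and noting that the remaining subcases are handled by the same mechanism.
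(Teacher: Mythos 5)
Your proposal follows essentially the same route as the paper: expand $d(e_1)$ and $d(e_2)$ in the adapted basis, propagate $d$ along $e_{k+1}=[e_k,e_1]$, extract constraints from $[e_2,e_1]=0$, $[e_2,e_2]=\gamma e_n$, $[e_1,e_2]$ and $[e_j,e_2]$, and then enumerate the vanishing patterns of $(\beta_3,\dots,\beta_{n-1},\gamma)$, working one representative case in full and noting the rest are analogous. The paper does exactly this (its detailed case is $\gamma=0$, $\beta_{n-1}\neq0$, $\beta_{n-2}\neq0$, remaining $\beta_i=0$, yielding dimension $n-1$, with the $n+2$ bound read off from $NGF_2$), so your plan is correct and not materially different.
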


\begin{proof} Similarly to the case of $FLb_n$ for the class $SLb_n$ we have the distribution for dimension of derivation algebra as follows.

$$dim Der(\emph{L}) =\left\{\begin{array}{lll}
 n+2,\  \ \text{if}\ \ 1.\ \  \gamma = 0 \ \text{and} \  \beta_i=0, \ \ 3\leq i \leq {n-1}.\\
 \qquad \quad \ \ \ \ \ 2.\ \  \gamma = 0 \ \text{and there exists}\ i \in\{3, 4, 5, ..., n-1\}\\
\hfill \text{such that}\  \beta_i\neq 0,\ \beta_j=0 \ \text{for}\ j \neq i. \\
  n+1,\  \ \text{if}\ \  1.\ \   \gamma\neq 0 \ \text{and} \  \beta_i=0, \ \ 3\leq i \leq {n-1}.\\
\qquad \quad \ \ \ \ \  2.\ \   \gamma =0 \ \text{and} \  \beta_i\neq 0, \ \ 3\leq i \leq {n-1}.\\
\qquad \quad \ \ \ \ \  3.\  \ \gamma\neq 0  \ \text{and thee exists} \ \ \ell\in\{3, 4, 5, ..., n-1\}\  \text{such that}\  \beta_i\neq0,\ \text{for all}\ \\ \hfill i\geq \ell \ \text{and} \ \  \beta_i= 0,\ \text{if}\  i < \ell, \ \text{where} \ n=2\ell-1.\\
\qquad \quad \ \ \ \ \  4.\ \   \gamma \neq 0 \ \text{and} \  \beta_i\neq 0, \ \ 3\leq i \leq {n-1}.\\
\qquad \quad \ \ \ \ \ 5.\ \  \gamma \neq 0 \ \text{and there exists}\ i \in\{3, 4, 5, ..., n-1\}\\
\hfill \text{such that}\  \beta_i\neq 0,\ \beta_j=0 \ \text{for}\ j \neq i. \\
\qquad \quad \ \ \ \ \ 6.\ \  \gamma \neq 0 \ \text{and there exists}\ i \in\{3, 4, 5, ..., n-1\}\\
\hfill \text{such that}\  \beta_i= 0,\ \beta_j\neq 0 \ \text{for}\ j \neq i. \\
\qquad \quad \ \ \ \ \ 7.\ \  \gamma = 0 \ \text{and there exists}\ i \in\{3, 4, 5, ..., n-1\}\\
\hfill \text{such that}\  \beta_i= 0,\ \beta_j\neq 0 \ \text{for}\ j \neq i. \\
n, \  \quad \ \  \text{if}\ \ \ 1.\  \ \gamma\neq 0  \ \text{and there exists}\ \ \ \ell\in\{3, 4, 5, ..., n-1\}\ \text{such that}\  \beta_i\neq0,\ \text{for all}\\ \hfill \ i\geq \ell \ \text{and} \ \  \beta_i= 0,\ \text{if}\ i < \ell, \ \text{where} \ n\neq 2\ell-1.\\
\qquad \quad \ \ \ \ \ 2.\ \   \gamma \neq 0,\ \beta_{n-1}\neq 0 ,  \ \text{and} \  \beta_i= 0, \ \ 3\leq i \leq {n-2}.\\
\qquad \quad \ \ \ \ \ 3.\ \    \gamma \neq 0,\ \beta_{n-1}= 0,\ \beta_{n-2}\neq 0 \ \text{and} \  \beta_i= 0, \ \ 3\leq i \leq {n-3}.\\
\qquad \quad \ \ \ \ \ 4.\ \   \gamma = 0,\ \beta_{n-1}= 0,\ \beta_{n-2}\neq 0 \ \text{and} \  \beta_i= 0, \ \ 3\leq i \leq{n-3}.\\
\qquad \quad \ \ \ \ \  5.\ \  \gamma\neq 0,\  \beta_{n-1}\neq 0,\ \beta_{n-2}\neq 0   \ \text{and}\  \beta_i= 0,  \ \ 3\leq i \leq {n-3}.\\
 n-1, \ \ \text{if} \ \  1.\ \   \  \gamma= 0,\  \beta_{n-1}\neq 0,\ \beta_{n-2}\neq 0   \ \text{and}\  \beta_i= 0,  \ \ 3\leq i \leq {n-3}.\\
\qquad \quad \ \ \ \ \ 2.\ \    \gamma = 0,\ \beta_{3}\neq0 ,  \ \text{and} \  \beta_i= 0, \ \ 4\leq i \leq {n-1}.\\
\end{array}
\right.$$

Let $ \gamma= 0,\ \beta_{n-1}\neq 0,\ \beta_{n-2}\neq 0 $ and $ \beta_i= 0, $ for $i=3,4 ,5 , .., n-3.$
Put $ d(e_j)=\sum\limits_{i=j}^nd_i^je_i,$ where $ j=1,2. $
Since, $ [e_1, e_1]=e_3$, we have
$$d(e_3)=[d(e_1), e_1]+[e_1, d(e_1)]=\left[\sum_{i=1}^nd_i^1e_i, e_1\right]+\left[e_1, \sum_{i=1}^nd_i^1e_i\right]$$
$$=d_1^1[e_1, e_1]+d_2^1[e_2, e_1]+\left[\sum_{i=3}^nd_i^1e_i, e_1\right]+d_1^1[e_1, e_1]+d_2^1[e_1, e_2]$$
$$=d_1^1e_3+\sum_{i=3}^{n-1}d_i^1e_{i+1}+d_1^1e_3+d_2^1\beta_{n-2}e_{n-1}+d_2^1\beta_{n-1}e_{n}$$
$$=(2d_1^1)e_3+\sum_{i=3}^{n-1}d_i^1e_{i+1}+d_2^1\beta_{n-2}e_{n-1}+d_2^1\beta_{n-1}e_{n}$$
$$=(2d_1^1)e_3+\sum_{i=4}^{n-2}d_{i-1}^1e_i+(d_{n-2}^1+d_2^1\beta_{n-2})e_{n-1}+(d_{n-1}^1+d_2^1\beta_{n-1})e_{n}.$$
Thus,\\
\begin{equation}d(e_3)=(2d_1^1)e_3+\sum_{i=4}^{n-2}d_{i-1}^1e_i+(d_{n-2}^1+d_2^1\beta_{n-2})e_{n-1}
+(d_{n}^1+d_2^1\beta_{n-1})e_{n}.\end{equation}
From $ [e_2, e_1]=0 $ we get $$0=[d(e_2), e_1]+[e_2, d (e_1)] =\left[\sum_{i=2}^nd_i^2e_i, e_1\right]+\left[e_2, \sum_{i=1}^nd_i^1e_i\right] $$
$$=d_2^2[e_2, e_1]+\left[\sum_{i=3}^nd_i^2e_i, e_1\right]+d_1^1[e_2, e_1]+d_2^1[e_2, e_2] $$
$$=\left[\sum_{i=3}^nd_i^2e_i, e_1\right]=\sum_{i=4}^{n-1}d_{i-1}^2e_{i}.$$
Therefore we obtain
\begin{equation}\begin{array}{lll}
d^2_i=0 , \ \ \  3\leq i\leq n-1.
\end{array}
\end{equation}
Consider $[e_3, e_1]=e_4$. Then
 $$d(e_4)=[d(e_3), e_1]+[e_3, d(e_1)]   $$
$$=\left[(2d_1^1)e_3+\sum_{i=4}^{n-2}d_{i-1}^1e_i+d_2^1(d_{n-1}^1+\beta_{n-2})e_{n-1}
+(d_{n}^1+d_2^1\beta_{n-1})e_{n}, e_1\right]+\left[e_3, \sum_{i=1}^nd_i^1e_i\right]   $$
$$=(2d_1^1)e_4+\sum_{i=4}^{n-2}d_{i-1}^1e_{i+1}+(d_{n-1}^1+d_2^1\beta_{n-2})e_{n}
+\left[e_3, \sum_{i=1}^nd_i^1e_i\right]   $$
$$=(2d_1^1)e_4+\sum_{i=4}^{n-2}d_{i-1}^1e_{i+1}+(d_{n-1}^1+d_2^1\beta_{n-2})e_{n}
+d_1^1e_4   $$
$$=(3d_1^1)e_4+\sum_{i=4}^{n-2}d_{i-1}^1e_{i+1}+(d_{n-1}^1+d_2^1\beta_{n-2})e_{n}.
$$
Thus,\\
\begin{equation}d(e_4)=(3d_1^1)e_4+\sum_{i=4}^{n-2}d_{i-1}^1e_{i+1}+(d_{n-1}^1+d_2^1\beta_{n-2})e_{n}.
\end{equation}
Take $[e_4, e_1]=e_5$. Then
 $$d(e_5)=[d(e_4), e_1]+[e_4, d(e_1)]   $$
$$=\left[(3d_1^1)e_4+\sum_{i=4}^{n-2}d_{i-1}^1e_{i+1}+d_2^1(d_{n-1}^1+\beta_{n-2})e_{n}, e_1\right]+\left[e_4, \sum_{i=1}^nd_i^1e_i\right]   $$
$$=(3d_1^1)e_5+\sum_{i=4}^{n-2}d_{i-1}^1e_{i+2}+d_i^1e_5   =(4d_1^1)e_5+\sum_{i=6}^{n}d_{i-3}^1e_{i}. $$
Hence,
\begin{equation}d(e_5)=(4d_1^1)e_5+\sum_{i=6}^{n}d_{i-3}^1e_{i}.
\end{equation}
Similarly,\begin{equation} \label{103} d(e_k)=(k-1)d_1^1e_k+\sum_{i=k+1}^{n}d_{i-k+2}^1e_{i} .  \end{equation}
It is clear that this relation is true for $k\geq 5$.
Consider, $ e_n=[e_{n-1}, e_1]$ then $  d(e_n)=d([e_{n-1}, e_1]).$
So, $$ d(e_n)=[d(e_{n-1}), e_1]+\left[e_{n-1}, d(e_1)\right]. $$
We substitute $k$ by $ n-1 $ in (\ref{103}), and obtain
$$ d(e_n)=[(n-2)d_1^1e_{n-1}+d_{3}^1e_{n} , e_1]+
\left[e_{n-1}, \sum_{i=1}^nd_i^1e_i\right]
=((n-2)d_1^1)e_{n}+d_1^1e_n
 =((n-1)d_1^1)e_{n}.$$
Thus,
\begin{equation} \label{56} d(e_n)=((n-1)d_1^1)e_{n} .
\end{equation}
On the other hand,  $$ [e_1, e_2]=\beta_{n-2}e_{n-1}+\beta_{n-1}e_n.$$
Then $$d(e_n)=\frac{1}{\beta_{n-1}}\left(d([e_1, e_2])-\beta_{n-2}d(e_{n-1})\right).$$
This implies, $$=\frac{1}{\beta_{n-1}}\left([d(e_1), e_2]+[e_1, d(e_2)]-\beta_{n-2}d(e_{n-1})\right)$$
$$=\frac{1}{\beta_{n-1}}\left([\sum_{i=1}^nd_i^1e_i, e_2]+[e_1, \sum_{i=2}^nd_i^2e_i]-\beta_{n-2}d(e_{n-1})\right)$$
$$=\frac{1}{\beta_{n-1}}\left(d_1^1[e_1, e_2]+d_2^1[e_2, e_2]+d_3^1[e_3, e_2]+d_2^2[e_1, e_2]-\beta_{n-2}d(e_{n-1})\right)$$
$$=\frac{1}{\beta_{n-1}}\left(d_1^1(\beta_{n-2}e_{n-1}+\beta_{n-1}e_{n})+d_3^1\beta_{n-2}e_n
+d_2^2(\beta_{n-2}e_{n-1}+\beta_{n-1}e_{n})-\beta_{n-2}d(e_{n-1})\right)$$
$$=\frac{1}{\beta_{n-1}}\left((d_1^1+d_2^2)(\beta_{n-2}e_{n-1}+\beta_{n-1}e_{n})
+d_3^1\beta_{n-2}e_n-\beta_{n-2}( (n-2)d_1^1e_{n-1}+d_3^1e_{n})\right)$$
$$=\frac{1}{\beta_{n-1}}\left(((3-n)d_1^1+d_2^2)\beta_{n-2}e_{n-1}+((d_1^1
+d_2^2)\beta_{n-1}+d_3^1\beta_{n-2}e_n-\beta_{n-2} d_3^1)e_{n}\right).$$
Thus,\\
\begin{equation} \label{57} d(e_n)=\frac{1}{\beta_{n-1}}\left(((3-n)d_1^1+d_2^2)\beta_{n-2}e_{n-1}\right)
+((d_1^1+d_2^2))e_n.\end{equation}
Comparing  (\ref{56}) and (\ref{57})  we obtain
\begin{equation} \label{58} \begin{array}{lll}(3-n)d_1^1+d_2^2=0 \ \text{ that is}\ \ d_2^2=(n-3)d_1^1 \\ \text{ and}\
 (d_1^1+d_2^2)=(n-1)d_1^1 \ \text{which implies}\ \ d_2^2=(n-2)d_1^1.\end{array}\end{equation}
 From (\ref{58}) we get
 \begin{equation}
 d_1^1=0,\  \text{and}\  d_2^2=0.
 \end{equation}
 The matrix of $ d $  has the form $ D=(d_k^l)_{k,l}=1, 2, 3 ,...n,  $ where
\begin{equation}\begin{array}{lll}
$$d_2^2=d_1^1=0, \ \  d_2^1=0 ,$$\\
 d_1^2=0 \ \text{and}\
d^2_i=0 , \ \ \  3\leq i\leq n-1.
\end{array}
\end{equation}
The dimension of the derivation algebra of $L\in SLb_n$ is $ n-1.$

The other cases are treated similarly.

%

And at last, if $L \in SLb_n$ with $ \gamma=0 $ and $ \alpha_i=0, $ for $i=4,5,..,n-1$. Then dimension of the derivation algebra of $L$ is $n+2,$ which is immediate from the case of $NGF_2.$

\end{proof}

\section{Acknowledgements}

\indent\indent  The work was supported by FRGS Project 01-12-10-978FR MOHE (Malaysia).

\end{document}